\documentclass[11pt]{article}

% PACKAGES

\usepackage{amssymb}
\usepackage{amsmath}
\usepackage{color}
\usepackage{theorem}
\usepackage[all]{xy}
\usepackage[normalem]{ulem}

\newcommand{\XYMATRIX}{\xymatrix@M=6pt}
\newcommand{\aremb}{\ar@{^{(}->}}
\newcommand{\arembfrom}{\ar@{<-^{)}}}

% ENVIRONMENTS

\numberwithin{equation}{section}

\theorembodyfont{\slshape}

  \newtheorem{THM}{Theorem}[section]
  \newtheorem{LEM}[THM]{Lemma}

\theorembodyfont{\rmfamily}

  \newtheorem{DEF}[THM]{Definition}
  \newtheorem{EX}{Example}[section]

\newif\ifQEDsign
\newcommand{\QED}{\global\QEDsigntrue\hfill$\square$}

\newenvironment{proof}%
    {\par\noindent\textit{Proof.}\global\QEDsignfalse}%
    {\ifQEDsign\else\QED\fi\par\bigskip\par}

% DEFINITIONS

\newcommand{\lex}{\mathrel{<_{\mathit{lex}}}}
\newcommand{\alex}{\mathrel{<_{\mathit{alex}}}}
\newcommand{\galex}{\mathrel{>_{\mathit{alex}}}}
\newcommand{\clex}{\mathrel{<_{\overline{\mathit{lex}}}}}

\renewcommand{\le}{\leqslant}
\renewcommand{\ge}{\geqslant}

\newcommand{\0}{\varnothing}

\renewcommand{\sec}{\cap}
\renewcommand{\phi}{\varphi}
\renewcommand{\epsilon}{\varepsilon}
\newcommand{\UNION}{\bigcup}

\newcommand{\CC}{\mathbf{C}}
\newcommand{\DD}{\mathbf{D}}

\newcommand{\KK}{\mathbf{K}}

\newcommand{\union}{\cup}
\newcommand{\restr}[2]{\hbox{$#1$}\hbox{$\upharpoonright$}_{#2}}
\newcommand{\reduct}[2]{\hbox{$#1$}\hbox{$|$}_{#2}}
\newcommand{\Boxed}[1]{\mbox{$#1$}}

\newcommand{\Ob}{\mathrm{Ob}}

\newcommand{\arity}{\mathrm{ar}}

\newcommand{\op}{\mathrm{op}}

\newcommand{\calA}{\mathcal{A}}
\newcommand{\calB}{\mathcal{B}}
\newcommand{\calC}{\mathcal{C}}
\newcommand{\calD}{\mathcal{D}}
\newcommand{\calE}{\mathcal{E}}

\newcommand{\calL}{\mathcal{L}}

\newcommand{\calP}{\mathcal{P}}

\newcommand{\calS}{\mathcal{S}}

\newcommand{\calX}{\mathcal{X}}

\newcommand{\catH}{{\mathbf{H}}}

\newcommand{\catGR}{\mathbf{GR}}
\newcommand{\catREL}{\mathbf{Rel}}
\newcommand{\catAREL}{\overrightarrow{\mathbf{Rel}}}

% DECLARATIONS

\DeclareMathOperator{\tp}{tp}

\DeclareMathOperator{\mat}{mat}
\DeclareMathOperator{\tup}{tup}

\title{A New Proof of the Ne\v set\v ril-R\"odl Theorem\\\textcolor{blue}{(CORRECTED VERSION)}}
\author{%
  Dragan Ma\v sulovi\'c\\
  University of Novi Sad, Faculty of Sciences\\
  Department of Mathematics and Informatics\\
  Trg Dositeja Obradovi\'ca 3, 21000 Novi Sad, Serbia\\
  e-mail: dragan.masulovic@dmi.uns.ac.rs}

\begin{document}
\maketitle

\begin{abstract}
  In this paper we give a new proof of the Ne\v set\v ril-R\"odl Theorem, a deep result of discrete mathematics
  which is one of the cornerstones of the structural Ramsey theory. In contrast to the well-known proofs
  which employ intricate combinatorial strategies, this proof is spelled out in the language of
  category theory and the main result follows by applying several simple categorical constructions.
  The gain from the approach we present here is that, instead of giving the proof in the form of a large
  combinatorial construction, we can start from a few building blocks and then combine them into the final proof using general principles.

  \bigskip

  \noindent \textbf{Key Words:} Ramsey property, relational strucures, categorical constructions

  \noindent \textbf{AMS Subj.\ Classification (2010):} 18A10, 05C55
\end{abstract}

\section{Introduction}

Generalizing the classical results of F.~P.~Ramsey from the late 1920's, the structural Ramsey theory originated at
the beginning of 1970’s in a series of papers (see \cite{N1995} for references).
We say that a class $\KK$ of finite structures has the \emph{Ramsey property} if the following holds:
for any number $k \ge 2$ of colors and all $\calA, \calB \in \KK$ such that $\calA$ embeds into $\calB$
there is a $\calC \in \KK$
such that no matter how we color the copies of $\calA$ in $\calC$ with $k$ colors, there is a \emph{monochromatic} copy
$\calB'$ of $\calB$ in $\calC$ (that is, all the copies of $\calA$ that fall within $\calB'$ are colored by the same color).

One of the cornerstones of the structural Ramsey theory is the Ne\v set\v ril-R\"odl Theorem which states that the class
of all finite linearly ordered relational structures (all having the same, fixed, relational type) has the Ramsey property
\cite{AH}, \cite{Nesetril-Rodl,Nesetril-Rodl-1983}. The fact that this result has been proved independently by several research teams, and then reproved
in various ways and in various contexts \cite{AH,Nesetril-Rodl-1983,Nesetril-Rodl-1989,Promel-1985}
clearly demonstrates the importance and justifies the distinguished status
this result has in discrete mathematics.

In this paper we give yet another proof of the Ne\v set\v ril-R\"odl Theorem. In contrast to the well-known proofs
which employ intricate combinatorial strategies, our proof starts from (a categorical version of)
the Graham-Rothschild Theorem~\cite{GR} and then transfers the Ramsey property
from the Graham-Rothschild category (see Example~\ref{opos.ex.GR} for the definition)
to the category of finite linearly ordered relational structures
using products of categories, pre-adjunctions (see Section~\ref{nrt.sec.transfer}
for the definition) and passing to a special subcategory.
The gain from the approach we present here is that,
instead of giving the proof in the form of a large combinatorial construction, we can start from a few building blocks and
then combine them into the final proof using general categorical principles.

In Section~\ref{nrt.sec.prelim} we give a brief overview of standard notions referring to first order structures
and formulate the Ramsey property in the language of category theory.
In Section~\ref{nrt.sec.transfer} we discuss the invariance of the Ramsey property under
finite products of categories, (a particular form of) pre-adjunctions, and under passing to
special subcategories. The corresponding results were proved in~\cite{masulovic-ramsey,masul-preadj,masul-drp-perm} but
in order to make the paper self-contained we provide brief sketches of the proofs.
Finally, in Section~\ref{nrt.sec.the-proof} we use the four results from Section~\ref{nrt.sec.transfer} to
present a new, categorical proof of the Ne\v set\v ril-R\"odl Theorem.
Let us underline that in this paper we do not consider the more general version of the theorem
which shows the Ramsey property also for classes of structures defined by forbidden substructures.

\section{Preliminaries}
\label{nrt.sec.prelim}

In order to fix notation and terminology in this section we give a brief overview of standard notions referring to first order structures
and formulate the Ramsey property in the language of category theory.
For a systematic treatment of category-theoretic notions we refer the reader to~\cite{AHS}.

\subsection{Structures}

Let $\Theta$ be a set of function and relation symbols.
A \emph{$\Theta$-structure} $\calA = (A, \Theta^\calA)$ is a set $A$ together with a set $\Theta^\calA$ of
functions and relations on $A$ which are interpretations of the corresponding symbols in $\Theta$.
The underlying set of a structure $\calA$, $\calA_1$, $\calA^*$, \ldots\ will always be denoted by its roman letter $A$, $A_1$, $A^*$, \ldots\ respectively.
A structure $\calA = (A, \Theta^\calA)$ is \emph{finite} if $A$ is a finite set.

An \emph{embedding} $f: \calA \hookrightarrow \calB$ is an injection $f: A \rightarrow B$ which respects
functions, and preserves and reflects the relations.
Surjective embeddings are \emph{isomorphisms}. We write $\calA \cong \calB$ to denote that $\calA$ and $\calB$
are isomorphic, and $\calA \hookrightarrow \calB$ to denote that there is an embedding of $\calA$ into $\calB$.

A structure $\calA$ is a \emph{substructure} of a structure
$\calB$ ($\calA \le \calB$) if the identity map is an embedding of $\calA$ into $\calB$.
Let $\calA$ be a structure and $\0 \ne B \subseteq A$. Then $\restr \calA B = (B, \restr{\Theta^\calA}{B})$ denotes
the \emph{substructure of $\calA$ induced by~$B$}, where $\restr{\Theta^\calA}{B}$ denotes the restriction of each
function and relation in $\Theta^\calA$ to~$B$.
Note that $\restr \calA B$ is not required to exist for every $B \subseteq A$. For example, if $\Theta^\calA$ contains functions,
only those $B$ which are closed with respect to all the functions in $\Theta^\calA$ qualify for the base set of a substructure.

If $\calA$ is a $\Theta$-structure and $\Sigma \subseteq \Theta$ then by $\reduct \calA \Sigma$ we denote the $\Sigma$-reduct of~$\calA$:
$\reduct \calA \Sigma = (A, \Sigma^\calA)$.

Let $\calL = (L, \Boxed<)$ be a finite linearly ordered set.
For a nonempty $X \subseteq L$ let $\min_\calL(X)$, resp.\ $\max_\calL(X)$, denote the minimum, resp.\ maximum, of $X$ in $\calL$.
As a convention we let $\min_\calL \0 = \mathstrut$the top element of $\calL$, and
$\max_\calL \0 = \mathstrut$the bottom element of $\calL$.

Let $\lex$, $\alex$ and $\clex$ denote the \emph{lexicographic}, \emph{anti-lexicographic} and
\emph{complemented lexicographic} ordering on $\calP(L)$, respectively, defined as follows:
\begin{align*}
A \lex B \text{ iff }  A \subset B, &\text{ or }\min\nolimits_\calL(B \setminus A) < \min\nolimits_\calL(A \setminus B) \text{ in case}\\
                                           &\text{ $A$ and $B$ are incomparable};\\
A \alex B \text{ iff } A \subset B, &\text{ or }\max\nolimits_\calL(A \setminus B) < \max\nolimits_\calL(B \setminus A) \text{ in case}\\
                                           &\text{ $A$ and $B$ are incomparable};\\
A \clex B \text{ iff } A \supset B, &\text{ or }\min\nolimits_\calL(A \setminus B) < \min\nolimits_\calL(B \setminus A) \text{ in case}\\
                                            &\text{ $A$ and $B$ are incomparable}.
\end{align*}
(Note that $A \clex B$ iff $L \setminus A \lex L \setminus B$, hence the name.)
It is easy to see that all these are linear orders on $\calP(L)$.

\subsection{The Ramsey property in the language of category theory}

Let $\CC$ be a category and $\calS$ a set. We say that
$
  \calS = \calX_1 \union \ldots \union \calX_k
$
is a \emph{$k$-coloring} of $\calS$ if $\calX_i \sec \calX_j = \0$ whenever $i \ne j$.
Equivalently, a $k$-coloring of $\calS$ is any map $\chi : \calS \to \{1, 2, \ldots, k\}$.
For an integer $k \ge 2$ and $A, B, C \in \Ob(\CC)$, the class of objects of $\CC$, we write
$
  C \longrightarrow (B)^{A}_k
$
to denote that for every $k$-coloring
$
  \hom_\CC(A, C) = \calX_1 \union \ldots \union \calX_k
$
there is an $i \in \{1, \ldots, k\}$ and a morphism $w \in \hom_\CC(B, C)$ such that
$w \cdot \hom_\CC(A, B) \subseteq \calX_i$.

\begin{DEF}
  A category $\CC$ has the \emph{Ramsey property} if
  for every integer $k \ge 2$ and all $A, B \in \Ob(\CC)$
  such that $\hom_\CC(A, B) \ne \0$ there is a
  $C \in \Ob(\CC)$ such that $C \longrightarrow (B)^{A}_k$.
\end{DEF}

\begin{EX}
  For $b \ge 2$ a \emph{linearly ordered $b$-uniform hypergraph} is a structure $\calA = (A, E, \Boxed<)$
  where $A$ is a nonempty set of \emph{vertices} of $\calA$, $E$ is a set of $b$-subsets of $A$ whose elements are
  called the \emph{hyperedges} of $\calA$ and $<$ is a linear order on~$A$.
  An \emph{embedding} between two linearly ordered $b$-uniform hypergraphs $\calA = (A, E, \Boxed<)$ and $\calB = (B, F, \Boxed<)$
  is an injective map $f : A \to B$ such that $e \in E$ if and only if $f(e) \in F$ for every $e \in E$.

  Let $\catH(b)$, $b \ge 2$, denote the category whose objects are finite linearly ordered $b$-uniform hypergraphs and whose
  morphisms are embeddings.
  The category $\catH(b)$ has the Ramsey property for every $b \ge 2$~\cite{AH,Nesetril-Rodl-1983}.
\end{EX}

\begin{EX}
  Let $\Theta$ be a relational language and let $\Boxed< \notin \Theta$ be a binary relational symbol.
  A \emph{linearly ordered $\Theta$-structure} is a $(\Theta\union\{\Boxed<\})$-structure
  $\calA = (A, \Theta^\calA, \Boxed{<^\calA})$ where $<^\calA$ is a linear order on~$A$.

  By $\catREL(\Theta, \Boxed<)$ we denote the category whose objects are finite linearly ordered $\Theta$-structures
  and whose morphisms are embeddings.
  The category $\catREL(\Theta, \Boxed<)$ has the Ramsey property. This is the famous
  Ne\v set\v ril-R\"odl Theorem~\cite{AH, Nesetril-Rodl}.
\end{EX}

\begin{EX}\label{opos.ex.GR}
  Let $X = \{x_1, x_2, \ldots\}$ be a countably infinite set of variables and let
  $A$ be a finite alphabet disjoint from $X$. An \emph{$m$-parameter word over $A$ of length $n$} is a word
  $w \in (A \union \{x_1, x_2, \ldots, x_m\})^n$ satisfying the following:
  \begin{itemize}
  \item
    each of the letters $x_1, \ldots, x_m$ appears at least once in $w$, and
  \item
    $\min(w^{-1}(x_i)) < \min(w^{-1}(x_j))$ whenever $1 \le i < j \le m$.
  \end{itemize}
  (Here $w^{-1}(a)$ denotes the set of all the positions in $w$ where $a$ appears.)

  Let $W^n_m(A)$ denote the set of all the $m$-parameter words over $A$ of length~$n$.
  For $u \in W^n_m(A)$ and $v = v_1 v_2 \ldots v_m \in W^m_k(A)$ let
  \begin{equation}\label{nrt.eq.dot}
    u \cdot v = u[v_1/x_1, v_2/x_2, \ldots, v_m/x_m] \in W^n_k(A)
  \end{equation}
  denote the word obtained by replacing each occurence of $x_i$ in $u$ with $v_i$,
  simultaneously for all $i \in \{1, \ldots, m\}$.
  
  Let $\catGR(A, X)$ denote the \emph{Graham-Rothschild category over $A$ and $X$} whose objects are positive integers 1, 2, \ldots,
  whose morphisms are given by $\hom(k, n) = W^n_k(A)$ if $k \le n$ and $\hom(k, n) = \0$ if $k > n$, and where the composition
  of morphisms is defined in~\eqref{nrt.eq.dot}.
  For every finite set $A$ and a countably infinite set $X = \{x_1, x_2, \ldots\}$ disjoint from $A$ the Graham-Rothschild
  category $\catGR(A, X)$ has the Ramsey property. This is the famous Graham-Rothschild Theorem~\cite{GR}.
\end{EX}

\section{Transferring the Ramsey property between categories}
\label{nrt.sec.transfer}

Our proof of the Ne\v set\v ril-R\"odl Theorem relies on the idea of transferring the Ramsey property from a category
which is known to posses it (such as the Graham-Rothschild category) to the category we are interested in using some
``transfer principles''. In this section we collect three such principles proved in~\cite{masul-preadj,masul-drp-perm}.
In order to make the paper self-contained we also provide sketches of proofs.

A pair of \emph{maps}
$
    F : \Ob(\DD) \rightleftarrows \Ob(\CC) : G
$
is a \emph{pre-adjunction between the categories $\CC$ and $\DD$} \cite{masul-preadj} provided there is a family of maps
$$
    \Phi_{Y,X} : \hom_\CC(F(Y), X) \to \hom_\DD(Y, G(X))
$$
indexed by the family $\{(Y, X) \in \Ob(\DD) \times \Ob(\CC) : \hom_\CC(F(Y), X) \ne \0\}$
and satisfying the following:
\begin{itemize}
\item[(PA)]
  for every $C \in \Ob(\CC)$, every $D, E \in \Ob(\DD)$,
  every $u \in \hom_\CC(F(D), C)$ and every $f \in \hom_\DD(E, D)$ there is a $v \in \hom_\CC(F(E), F(D))$
  satisfying $\Phi_{D, C}(u) \cdot f = \Phi_{E, C}(u \cdot v)$.
  $$
    \XYMATRIX{
      F(D) \ar[rr]^u                     & & C & & D \ar[rr]^{\Phi_{D, C}(u)}                    & & G(C) \\
      F(E) \ar[u]^v \ar[urr]_{u \cdot v} & &       & & E \ar[u]^f \ar[urr]_{\Phi_{E, C}(u \cdot v)}
    }
  $$
\end{itemize}
(Note that in a pre-adjunction $F$ and $G$ are \emph{not} required to be functors, just maps from the class of objects of one of the two
categories into the class of objects of the other category; also $\Phi$ is just a family of
maps between hom-sets satisfying the requirement above.)

\begin{THM} \cite{masul-preadj} \label{opos.thm.main}
  Let $\CC$ and $\DD$ be categories and let $F : \Ob(\DD) \rightleftarrows \Ob(\CC) : G$ be a pre-adjunction with
  $\Phi_{Y,X} : \hom_\CC(F(Y), X) \to \hom_\DD(Y, G(X))$ as the corresponding
  family of maps between hom-sets. Assume that $\CC$ has the Ramsey property. Then $\DD$ has the Ramsey property.
\end{THM}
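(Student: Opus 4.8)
The plan is to use $F$ to transport the objects $A, B \in \Ob(\DD)$ into $\CC$, to invoke the Ramsey property there, and then to pull the resulting object back along $G$. Concretely, fix $k \ge 2$ and $A, B \in \Ob(\DD)$ with $\hom_\DD(A, B) \ne \0$. First I would check that $\hom_\CC(F(A), F(B)) \ne \0$: since $\id_{F(B)} \in \hom_\CC(F(B), F(B))$ the map $\Phi_{B, F(B)}$ is defined, and feeding $u = \id_{F(B)}$ together with any $f \in \hom_\DD(A, B)$ into condition (PA) (with $D = B$, $E = A$, $C = F(B)$) produces a morphism $v \in \hom_\CC(F(A), F(B))$. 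This small but essential preliminary observation is exactly what licenses the use of the Ramsey property of $\CC$ for the pair $(F(A), F(B))$.

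Applying the Ramsey property of $\CC$, I would obtain an object $C' \in \Ob(\CC)$ with $C' \longrightarrow (F(B))^{F(A)}_k$, and then set $C := G(C')$. It remains to prove $C \longrightarrow (B)^{A}_k$ in $\DD$. Given a $k$-coloring $\hom_\DD(A, C) = \calX_1 \union \ldots \union \calX_k$, the idea is to transport it back to $\CC$ along $\Phi$: color each $u \in \hom_\CC(F(A), C')$ by the color of $\Phi_{A, C'}(u) \in \hom_\DD(A, G(C')) = \hom_\DD(A, C)$. This is a $k$-coloring of $\hom_\CC(F(A), C')$, so the choice of $C'$ yields a color $i$ and a morphism $w' \in \hom_\CC(F(B), C')$ with $w' \cdot \hom_\CC(F(A), F(B))$ contained in color class $i$.

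I would then take $w := \Phi_{B, C'}(w') \in \hom_\DD(B, G(C')) = \hom_\DD(B, C)$ as the witnessing morphism and verify $w \cdot \hom_\DD(A, B) \subseteq \calX_i$. Fix $f \in \hom_\DD(A, B)$. Applying (PA) with $D = B$, $E = A$, $C = C'$ and $u = w'$ gives a $v \in \hom_\CC(F(A), F(B))$ with $w \cdot f = \Phi_{B, C'}(w') \cdot f = \Phi_{A, C'}(w' \cdot v)$. By construction of the transported coloring, the color of $w \cdot f$ equals the color of $w' \cdot v$ in $\CC$; but $v \in \hom_\CC(F(A), F(B))$, so $w' \cdot v$ lies in color class $i$, whence $w \cdot f \in \calX_i$.

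The heart of the argument, and the step I expect to require the most care, is this last verification: it is precisely where condition (PA) is used to convert the composition $w \cdot f$ taking place in $\DD$ into a composition $w' \cdot v$ taking place in $\CC$, so that the monochromaticity obtained in $\CC$ can be read off in $\DD$. Everything else is bookkeeping: matching the hom-sets under $F$ and $G$, and checking that the transported coloring is genuinely a $k$-coloring.
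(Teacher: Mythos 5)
Your proposal is correct and takes essentially the same route as the paper's proof: pull the coloring of $\hom_\DD(A, G(C'))$ back along $\Phi_{A,C'}$ to a coloring of $\hom_\CC(F(A), C')$, extract a monochromatic $w' \in \hom_\CC(F(B), C')$ from the Ramsey property of $\CC$, and use condition (PA) to rewrite $\Phi_{B,C'}(w') \cdot f$ as $\Phi_{A,C'}(w' \cdot v)$ with $v \in \hom_\CC(F(A), F(B))$, so that monochromaticity transfers. Your preliminary verification that $\hom_\CC(F(A), F(B)) \ne \0$ (obtained by feeding $u = \id_{F(B)}$ into (PA)) is a hygiene step the paper's sketch silently skips, and it is indeed needed to invoke the Ramsey property of $\CC$ as defined.
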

\begin{proof}
  (Sketch)
  Take any $D, E \in \Ob(\DD)$ and an integer $k \ge 2$. Since $\CC$ has the Ramsey property, there is a $C \in \Ob(\CC)$
  such that $C \longrightarrow (F(D))^{F(E)}_k$. Let us show that $G(C) \longrightarrow (D)^{E}_k$.
  Take any coloring $\hom_\DD(E, G(C)) = \calX_1 \union \ldots \union \calX_k$ and construct a coloring
  $\hom_\CC(F(E), C) = \calX'_1 \union \ldots \union \calX'_k$ where
  $
    \calX'_i = \{u \in \hom_\CC(F(E), C) : \Phi_{E, C}(u) \in \calX_i\}
  $.
  By the choice of $C$ there is a $u \in \hom_\CC(F(D), C)$ and a $j \in \{1, \ldots, k\}$ such that
  $
    u \cdot \hom_\CC(F(E), F(D)) \subseteq \calX'_j
  $.
  Then it is easy to show that
  $
    \Phi_{D, C}(u) \cdot \hom_\DD(E, D) \subseteq \calX_j
  $.
  Namely, take any $f \in \hom_\DD(E, D)$. Since $F : \Ob(\DD) \rightleftarrows \Ob(\CC) : G$ is a pre-adjunction, there is a
  $v \in \hom_\CC(F(E), F(D))$ such that
  $
    \Phi_{D, C}(u) \cdot f = \Phi_{E, C}(u \cdot v)
  $.
  But then $u \cdot v \in \calX'_j$, so $\Phi_{E, C}(u \cdot v) \in \calX_j$.
  Therefore, $\Phi_{D, C}(u) \cdot f  \in \calX_j$.
\end{proof}

In other words, we have just shown that ``right pre-adjoints'' preserve the Ramsey property.
En passant, let us mention that the Ramsey property is invariant under categorical equivalence, and that right adjoints
preserve the Ramsey property while left adjoints preserve its dual~\cite{masulovic-ramsey}.
(A category $\CC$ has the \emph{dual Ramsey property} if $\CC^\op$ has the Ramsey property.)

An important transfer principle is the Product Ramsey Theorem for Finite Structures of M.~Soki\'c~\cite{sokic2}.
We proved this statement in the categorical context in~\cite{masul-drp-perm} where we used this abstract version
to prove that the class of finite permutations has the dual Ramsey property.

\begin{THM}\label{sokic-prod} \cite{masul-drp-perm}
  Let $\CC_1$ and $\CC_2$ be categories such that $\hom_{\CC_i}(A, B)$ is finite
  for all $A, B \in \Ob(\CC_i)$, $i \in \{1, 2\}$.
  If $\CC_1$ and $\CC_2$ both have the Ramsey property then
  $\CC_1 \times \CC_2$ has the Ramsey property.
  
  Consequently, if $\CC_1, \ldots, \CC_n$ are categories with the Ramsey property then the category
  $\CC_1 \times \ldots \times \CC_n$ has the Ramsey property.
\end{THM}
\begin{proof}
  (Sketch)
  Take any $k \ge 2$ and $\tilde A = (A_1, A_2)$, $\tilde B = (B_1, B_2)$ in $\Ob(\CC_1 \times \CC_2)$
  such that $\hom(\tilde A, \tilde B) \ne \0$. Take $C_1 \in \Ob(\CC_1)$ and
  $C_2 \in \Ob(\CC_2)$ so that $C_1 \longrightarrow (B_1)^{A_1}_{k}$ and
  $C_2 \longrightarrow (B_2)^{A_2}_{k^t}$,
  where $t$ is the cardinality of $\hom_{\CC_1}(A_1, C_1)$. Put $\tilde C = (C_1, C_2)$.
  To show that $\tilde C \longrightarrow (\tilde B)^{\tilde A}_k$ take any coloring
  $
    \chi : \hom_{\CC_1 \times \CC_2}({\tilde A},{\tilde C}) \to \{1, \ldots, k\}
  $.
  This coloring uniquely induces the $k^t$-coloring
  $
    \chi' : \hom_{\CC_2}({A_2},{C_2}) \to \{1, \ldots, k\}^{\hom_{\CC_1}({A_1},{C_1})}
  $.
  By construction, $C_{2} \longrightarrow (B_{2})^{A_{2}}_{k^t}$,
  so there is a $w_2 : B_2 \to C_2$ such that $w_2 \cdot \hom_{\CC_2}({A_2},{B_2})$ is $\chi'$-monochromatic.
  Define $\chi'' : \hom_{\CC_1}({A_1},{C_1}) \to \{1, \ldots, k\}$ by
  $\chi''(e_1) = \chi(e_1, e)$ for some $e \in w_2 \cdot \hom_{\CC_2}({A_2},{B_2})$. (Note that $\chi''$ is well defined because
  $w_2 \cdot \hom_{\CC_2}({A_2},{B_2})$ is $\chi'$-monochromatic.)
  Since $\CC_1$ has the Ramsey property there is a morphism
  $
    w_1 : B_1 \to C_1
  $
  such that $w_1 \cdot \hom_{\CC_1}({A_1},{B_1})$
  is $\chi''$-monochromatic. It is now easy to show that for $\tilde w = (w_1, w_2)$ we have that
  $\tilde w \cdot \hom_{\CC_1 \times \CC_2}({(A_1, A_2)},{(B_1, B_2)})$ is $\chi$-monochromatic.
\end{proof}

Finally, we shall also need a way to transfer the Ramsey property from a category to its subcategory. (For many deep results
obtained in this fashion see~\cite{Nesetril-AllThose}.) In~\cite{masul-drp-perm} we devised
a simple result which enables us to transfer the Ramsey property from a category to its (not necessarily full) subcategory,
as follows.

A \emph{diagram} in a category $\CC$ is a functor $F : \Delta \to \CC$ where the category $\Delta$ is referred to as the
\emph{shape of the diagram}. We shall say that a diagram $F : \Delta \to \CC$ is \emph{consistent in $\CC$} if there exists a $C \in \Ob(\CC)$
and a family of morphisms $(e_\delta : F(\delta) \to C)_{\delta \in \Ob(\Delta)}$ such that for every
morphism $g : \delta \to \gamma$ in $\Delta$ we have $e_\gamma \cdot F(g) = e_\delta$:
$$
  \xymatrix{
     & C & \\
    F(\delta) \ar[ur]^{e_\delta} \ar[rr]_{F(g)} & & F(\gamma) \ar[ul]_{e_\gamma}
  }
$$
We say that $C$ together with the family of morphisms
$(e_\delta)_{\delta \in \Ob(\Delta)}$ forms a \emph{compatible cone in $\CC$ over the diagram~$F$}.

A \emph{binary category} is a finite, acyclic, bipartite digraph with loops 
where all the arrows go from one class of vertices into the other
and the out-degree of all the vertices in the first class is~2 (modulo loops):
$$
\xymatrix{
  & \\
  \bullet \ar@(ur,ul) & \bullet \ar@(ur,ul) & \bullet \ar@(ur,ul) & \ldots & \bullet \ar@(ur,ul) \\
  \bullet \ar@(dr,dl) \ar[u] \ar[ur] & \bullet \ar@(dr,dl) \ar[ur] \ar[ul] & \bullet \ar@(dr,dl) \ar[u] \ar[ur] & \ldots & \bullet \ar@(dr,dl) \ar[u] \ar[ull] \\
  &
}
$$
A \emph{binary diagram} in a category $\CC$ is a functor $F : \Delta \to \CC$ where $\Delta$ is a binary category,
$F$ takes the bottom row of $\Delta$ onto the same object, and takes the top row of $\Delta$ onto
the same object, Fig.~\ref{nrt.fig.2}.
A subcategory $\DD$ of a category $\CC$ is \emph{closed for binary diagrams} if every binary diagram
$F : \Delta \to \DD$ which is consistent in $\CC$ is also consistent in~$\DD$.

\begin{figure}
  $$
  \xymatrix{
    \bullet & \bullet & \bullet
    & & B & B & B
  \\
    \bullet \ar[u] \ar[ur] & \bullet \ar[ur] \ar[ul] & \bullet \ar[ul] \ar[u]
    & & A \ar[u]^{f_1} \ar[ur]_(0.3){f_2} & A \ar[ur]^(0.3){f_4} \ar[ul]_(0.3){f_3} & A \ar[ul]^(0.3){f_5} \ar[u]_{f_6}
  \\
    & \Delta \ar[rrrr]^F  & & & & \CC  
  }
  $$
  \caption{A binary diagram in $\CC$ (of shape $\Delta$)}
  \label{nrt.fig.2}
\end{figure}
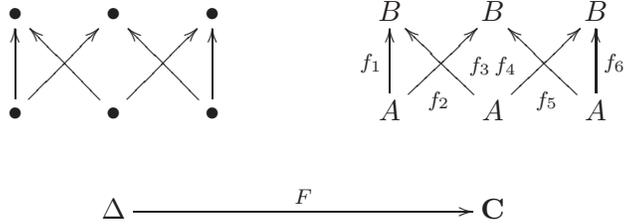

\begin{THM}\label{nrt.thm.1} \cite{masul-drp-perm}
  Let $\CC$ be a category such that every morphism in $\CC$ is monic and
  such that $\hom_\CC(A, B)$ is finite for all $A, B \in \Ob(\CC)$, and let $\DD$ be a
  (not necessarily full) subcategory of~$\CC$. If $\CC$ has the Ramsey property and $\DD$ is closed for binary diagrams,
  then $\DD$ has the Ramsey property.
\end{THM}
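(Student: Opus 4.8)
The plan is to manufacture the required $\DD$-object as a compatible cone in $\DD$ over a binary diagram that records how the copies of $B$ overlap inside a Ramsey object of $\CC$. Fix $A, B \in \Ob(\DD)$ with $\hom_\DD(A, B) \ne \0$ and an integer $k \ge 2$. Since $\hom_\DD(A, B) \subseteq \hom_\CC(A, B)$, the Ramsey property of $\CC$ supplies an object $C \in \Ob(\CC)$ with $C \longrightarrow (B)^A_k$ in $\CC$. The difficulty is twofold: the morphism witnessing this arrow lives in $\hom_\CC(B, C)$ and need not be a $\DD$-morphism, and a $k$-coloring of $\hom_\DD(A, C')$ for a candidate $\DD$-object $C'$ is a priori unrelated to a coloring of $\hom_\CC(A, C)$. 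The point of the closure hypothesis is to let us replace $C$ by a genuine $\DD$-object that carries the same overlap information.

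First I would build the diagram $F : \Delta \to \DD$. Put the top vertices in bijection with the $\CC$-copies of $B$ in $C$, i.e.\ with $\hom_\CC(B, C) = \{w_1, \ldots, w_N\}$ (finite by hypothesis), each mapped by $F$ to $B$; isolated top vertices are permitted. For every $a \in \hom_\CC(A, C)$ consider the set $P_a = \{(w_s, g) : g \in \hom_\DD(A, B), \ w_s \cdot g = a\}$ of $\DD$-factorizations of $a$ through a copy of $B$, and add bottom vertices (all mapped to $A$) whose two out-arrows, labelled by the relevant $g$'s, join consecutive members of a fixed enumeration of $P_a$, so that the binary constraints force all of $P_a$ to be identified in any compatible cone. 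Here is exactly where monicity enters: since every morphism of $\CC$ is monic, within one copy $w_s$ a copy $a$ of $A$ admits at most one $\DD$-factorization, so distinct members of $P_a$ sit in distinct copies of $B$ and the two out-arrows of each bottom vertex really do go to two distinct top vertices, keeping $\Delta$ a legitimate binary category. Taking $e_{t_s} = w_s$ and $e_b = a$ shows this diagram is consistent in $\CC$ with apex $C$; by the closure hypothesis it is consistent in $\DD$, yielding an object $C' \in \Ob(\DD)$ together with $\DD$-morphisms $\bar w_s : B \to C'$ such that $\bar w_s \cdot g = \bar w_{s'} \cdot g'$ in $C'$ whenever $w_s \cdot g = w_{s'} \cdot g'$ in $C$.

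I claim that $C' \longrightarrow (B)^A_k$ in $\DD$, which finishes the proof. Given any $k$-coloring $\chi'$ of $\hom_\DD(A, C')$, I would pull it back to a coloring $\chi$ of $\hom_\CC(A, C)$ by setting $\chi(a) = \chi'(\bar w_s \cdot g)$ whenever $a = w_s \cdot g$ with $g \in \hom_\DD(A, B)$, and arbitrarily on morphisms admitting no such factorization. This is well defined precisely because the cone equations guarantee $\bar w_s \cdot g = \bar w_{s'} \cdot g'$ whenever two $\DD$-factorizations give the same $a$. Applying $C \longrightarrow (B)^A_k$ to $\chi$ produces some $w = w_{s_0} \in \hom_\CC(B, C)$ with $w \cdot \hom_\CC(A, B)$ monochromatic, hence $w \cdot \hom_\DD(A, B)$ monochromatic of some colour $i$. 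For every $g \in \hom_\DD(A, B)$ we then get $\chi'(\bar w_{s_0} \cdot g) = \chi(w_{s_0} \cdot g) = i$, so the $\DD$-morphism $\bar w_{s_0} : B \to C'$ satisfies $\bar w_{s_0} \cdot \hom_\DD(A, B) \subseteq (\chi')^{-1}(i)$, as required.

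The main obstacle is isolating the correct diagram and verifying that the induced coloring $\chi$ is well defined: everything hinges on the overlaps $w_s \cdot g = w_{s'} \cdot g'$ of $\DD$-copies of $A$ being faithfully reproduced in $C'$, and this is exactly what forces the joint use of the monicity assumption (to keep the digraph binary) and the closure-for-binary-diagrams hypothesis (to pass from $C$ to $C'$). The remaining points — finiteness of $\Delta$, functoriality of $F$, and consistency in $\CC$ with apex $C$ — are routine.
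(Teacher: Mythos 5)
Your proof is correct and takes essentially the same route as the paper's: obtain $C$ with $C \longrightarrow (B)^A_k$ in $\CC$, encode the overlaps $w_s \cdot g = w_{s'} \cdot g'$ of $\DD$-factorizations in a binary diagram whose consistency in $\CC$ (with apex $C$) plus the closure hypothesis yields a $\DD$-object $C'$ with morphisms $\bar w_s$, and then pull the coloring of $\hom_\DD(A, C')$ back to $\hom_\CC(A, C)$ (arbitrarily on non-factorizable morphisms, matching the paper's catch-all color $\calX'_1$) before transferring the monochromatic copy. Your chaining of an enumeration of $P_a$ instead of one bottom vertex per overlapping pair, and your explicit observation that monicity forces the two out-arrows of each bottom vertex into distinct copies of $B$, are only cosmetic refinements of the paper's construction.
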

\begin{proof}
  (Sketch)
  Take any $k \ge 2$ and $A, B \in \Ob(\DD)$ such that $\hom_\DD(A, B) \ne \0$. Since $\DD$ is a subcategory of $\CC$
  and $\CC$ has the Ramsey property, there is a $C \in \Ob(\CC)$ such that $C \longrightarrow (B)^A_k$.
  
  Let us now construct a binary diagram in $\DD$ as follows. Let $\hom_\CC(B, C) = \{e_1, e_2, \ldots, e_n\}$.
  Intuitively, for each $e_i \in \hom_\CC(B, C)$ we add a copy of $B$ to the diagram, and whenever $e_i \cdot u = e_j \cdot v$
  for some $u, v \in \hom_\DD(A, B)$ we add a copy of $A$ to the diagram together with two arrows: one going into the $i$th copy of $B$
  labelled by $u$ and another one going into the $j$th copy of $B$ labelled by~$v$:
  $$
  \xymatrix{
    & & C
  \\
    B \ar[urr]^{e_1} & B \ar[ur]_(0.6){e_i} & \ldots & B \ar[ul]^(0.6){e_j} & B \ar[ull]_{e_n}
  \\
    A \ar[u] \ar[ur] & A \ar[urr]_(0.3){v} \ar[u]_{u} & \ldots & A \ar[ur] \ar[ul] & \DD
    \save "2,1"."3,5"*[F]\frm{} \restore
  }
  $$
  Note that, by the construction, this diagram is consistent in $\CC$, so,
  by the assumption, it is consistent in~$\DD$ as well. Therefore, there is
  a $D \in \Ob(\DD)$ and morphisms $f_i : B \to D$, $1 \le i \le n$, such that the following diagram in $\DD$ commutes:
  $$
  \xymatrix{
    & & D
  \\
    B \ar[urr]^{f_1} & B \ar[ur]_(0.6){f_i} & \ldots & B \ar[ul]^(0.6){f_j} & B \ar[ull]_{f_n}
  \\
    A \ar[u] \ar[ur] & A \ar[urr]_(0.3){v} \ar[u]_{u} & \ldots & A \ar[ur] \ar[ul] & 
  }
  $$
  Let us show that in $\DD$ we have $D \longrightarrow (B)^A_k$. Take any $k$-coloring
  $
    \hom_\DD(A, D) = \calX_1 \union \ldots \union \calX_k
  $,
  and define a $k$-coloring
  $
    \hom_\CC(A, C) = \calX'_1 \union \ldots \union \calX'_k
  $
  as follows. For $j \in \{2, \ldots, k\}$ let
  $
    \calX'_j = \{e_s \cdot u : 1 \le s \le n, u \in \hom_\DD(A, B), f_s \cdot u \in \calX_j \}
  $,
  and then let
  $
    \calX'_1 = \hom_\CC(A, C) \setminus \UNION_{j=2}^k \calX'_j
  $.
  Since $C \longrightarrow (B)^A_k$, there is an $e_\ell \in \hom_\CC(B, C)$ and a $j$ such that
  $e_\ell \cdot \hom_\CC(A, B) \subseteq \calX'_j$. Then it easily follows that
  $f_\ell \cdot \hom_\DD(A, B) \subseteq \calX_j$.
\end{proof}

\section{The Ne\v set\v ril-R\"odl Theorem}
\label{nrt.sec.the-proof}

We are now ready to present a new proof of the Ne\v set\v ril-R\"odl Theorem.

Let us start by showing that for every $b \ge 2$ the category $\catH(b)$ has the Ramsey property.
The proof that we present here is an instance of a more general phenomenon which we addressed in more detail in~\cite{masul-preadj}
and where the main idea of the proof comes from.

\begin{THM} \cite{AH,Nesetril-Rodl-1983}\label{nrt.thm.hgr}
  For every $b \ge 2$ the category $\catH(b)$ has the Ramsey property.
\end{THM}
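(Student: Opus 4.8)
The plan is to deduce the statement from the Graham--Rothschild Theorem (Example~\ref{opos.ex.GR}) by constructing a pre-adjunction between $\catH(b)$ and a Graham--Rothschild category $\catGR(A,X)$ and then invoking Theorem~\ref{opos.thm.main}. Concretely, I would fix a finite alphabet $A$ of edge-markers (the first guess being $A = \{0,1\}$, for ``non-edge''/``edge''), set $F : \Ob(\catH(b)) \to \Ob(\catGR(A,X))$ to send a hypergraph $\calA$ to its number of vertices $|A|$, and choose $G : \Ob(\catGR(A,X)) \to \Ob(\catH(b))$ so that $G(n)$ is a suitably \emph{universal} ordered $b$-uniform hypergraph built on the $n$ positions --- universal in the sense that every ordered $b$-uniform hypergraph on at most $n$ vertices occurs as an induced sub-hypergraph, with the presence of a given edge being decodable from the constant letters appearing in a parameter word. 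The entire weight of the argument is then carried by the maps $\Phi_{\calY,n}$, which must turn parameter words into embeddings of hypergraphs.

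The natural definition of $\Phi$ is the ``first-occurrence'' map: given $u \in \hom_{\catGR}(F(\calY), n) = W^n_{|Y|}(A)$, send the $i$-th vertex of $\calY$ to the position $\min(u^{-1}(x_i))$. Because the defining condition on parameter words forces $\min(u^{-1}(x_1)) < \min(u^{-1}(x_2)) < \cdots$, this map is automatically an order-preserving injection of the vertex set of $\calY$ into the positions underlying $G(n)$. The point needing care --- and which dictates the correct choice of $A$ and of the edge set of $G(n)$ --- is that $\Phi_{\calY,n}(u)$ must be a genuine \emph{embedding}, i.e.\ it must both preserve and reflect hyperedges. This is precisely where the constant letters of the parameter word are used to record, for each relevant $b$-subset, whether it is an edge of $\calY$, and where one must check that $G(n)$ has been arranged so that these records are read back faithfully.

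With $F$, $G$ and $\Phi$ in hand, the remaining task is to verify condition (PA): given a hypergraph embedding $f : \calE \hookrightarrow \calD$ and a parameter word $u \in W^n_{|D|}(A)$, I would produce the witness $v \in \hom_{\catGR}(F(\calE), F(\calD)) = W^{|D|}_{|E|}(A)$ by letting $v$ encode $f$ directly --- placing the variable $x_i$ at position $f(i)$ and filling the remaining $|D|-|E|$ positions with the constant letters that record the edges of $\calD$ meeting the image of $\calE$. One then checks, position by position, that substituting $v$ into $u$ reproduces exactly the composite embedding $\Phi_{\calD,n}(u)\cdot f$, which is the commuting square of~(PA). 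Once (PA) is established, Theorem~\ref{opos.thm.main} transfers the Ramsey property of $\catGR(A,X)$ to $\catH(b)$.

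I expect the main obstacle to be the second step: arranging the alphabet $A$ and the edge set of the universal hypergraph $G(n)$ so that the first-occurrence map actually \emph{reflects} non-edges (preservation being the easy half), and so that (PA) closes up exactly rather than only up to some discrepancy in the constant letters. If the naive $\Phi$ overshoots --- landing in a category of ordered structures strictly larger than $\catH(b)$, or producing order-preserving injections not induced by vertex maps --- then I would build the pre-adjunction into that larger category and recover $\catH(b)$ afterwards via Theorem~\ref{nrt.thm.1}, exhibiting it as a subcategory closed for binary diagrams.
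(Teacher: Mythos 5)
Your overall strategy --- transferring the Ramsey property from a Graham--Rothschild category along a pre-adjunction via Theorem~\ref{opos.thm.main} --- is exactly the paper's, but your concrete encoding breaks at the very step you flag as delicate, and it cannot be patched within your setup. Since your $F(\calY) = |Y|$ forgets the edge set and your $\Phi_{\calY,n}$ is the first-occurrence map, the injection $\Phi_{\calY,n}(u)$ depends only on $u$ and not on $\calY$: two hypergraphs $\calY_1, \calY_2$ on the same ordered vertex set but with different edge sets (say, one with a single hyperedge and one with none) give the \emph{identical} vertex map into the fixed object $G(n)$, and this one map would have to be simultaneously an embedding of both --- impossible, since whether the image $b$-set is a hyperedge of $G(n)$ is determined by $G(n)$ alone. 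The constant letters of $u$ cannot rescue this: they influence neither the first-occurrence positions nor the (fixed) edge set of $G(n)$, so no choice of alphabet $A$ and no ``universal'' target makes the naive $\Phi$ both preserve and reflect hyperedges. Your fallback via Theorem~\ref{nrt.thm.1} also misses the point: the defect is not that $\Phi$ lands in too large a category, but that $F$ discards precisely the information an embedding must reflect.

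The paper's construction repairs this by changing all three ingredients. It works over the \emph{singleton} alphabet $\{0\}$, so constants carry no information at all; $F(\calA)$ is the number of nonempty \emph{downsets} of $\calA$ (the singletons together with the at-least-two-element subsets of hyperedges), enumerated anti-lexicographically as $D_1 \alex D_2 \alex \ldots \alex D_m$, so that the Graham--Rothschild object attached to $\calA$ remembers the edge structure; and $G(n)$ is the intersection hypergraph on $\calP(\{1,\ldots,n\})$ ordered by $\clex$, with $\{Y_1,\ldots,Y_b\}$ a hyperedge iff $Y_1 \sec \ldots \sec Y_b \ne \0$. A word $u$ with blocks $X_\alpha = u^{-1}(x_\alpha)$ then sends vertex $i$ to the \emph{set} $a_i = \UNION\{X_\alpha : i \in D_\alpha\}$, not to a single position. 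Reflection of non-edges --- the obstacle you correctly identified --- comes for free from downset closure: a nonempty intersection of the $a_{i_j}$ must contain an entire block $X_\beta$, so $\{i_1,\ldots,i_b\} \subseteq D_\beta$, and $|D_\beta| \le b$ forces $D_\beta = \{i_1,\ldots,i_b\}$, a $b$-element downset, hence a hyperedge of $\calY$. For (PA) the witness is $h$ with $h_i = x_j$ when $f^{-1}(D_i) = D'_j$ and $h_i = 0$ otherwise; the genuinely nontrivial check that $h$ satisfies the min-occurrence condition for parameter words is exactly where the anti-lexicographic enumeration of downsets is used. So your first paragraph and your (PA) bookkeeping are sound in outline, but closing the gap requires this different choice of $F$, $G$ and $\Phi$, not a tuning of your alphabet or target.
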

\begin{proof}
  Fix a $b \ge 2$. In order to prove the theorem it suffices to show that there is a pre-adjunction
  $$
    F : \Ob(\catH(b)) \rightleftarrows \Ob(\catGR(\{0\}, X)) : G,
  $$
  where $X$ is a countably infinite set of variables disjoint from $\{0\}$.
  The result then follows from Theorem~\ref{opos.thm.main} and the fact that the category
  $\catGR(\{0\}, X)$ has the Ramsey property (Example~\ref{opos.ex.GR}).

  Let $\calA = (A, E, \Boxed<)$ be a finite linearly ordered $b$-uniform hypergraph.
  A \emph{downset} in $\calA$ is either
          \sout{a singleton $\{a\}$ where $a \in A$, or a subset $D$ of $A$ such that $|D| \ge 2$ and $D \subseteq e$ for some $e \in E$.}
          \textcolor{blue}{an edge of $\calA$, or a nonempty subset of $A$ of size strictly less than~$b$.}

  For an $\calA \in \Ob(\catH(b))$ let
  $F(\calA) = \mathstrut$the number of distinct nonempty downsets in $\calA$.
  On the other hand, for a positive integer $n$ let $G(n) = \big(\calP(\{1, \ldots, n\}), \calE_n, \Boxed\clex\big)$
  where
  $$
    \calE_n = \big\{ \{Y_1, \ldots, Y_b\} : Y_1, \ldots, Y_b \in \calP(\{1, \ldots, n\})
    \text{ and } Y_1 \sec \ldots \sec Y_b \ne \0 \big\}.
  $$
  For a finite linearly ordered $b$-uniform hypergraph $\calA$ and a positive integer $n$ define
  $$
    \Phi_{\calA, n} : \hom_{\catGR(\{0\}, X)}(F(\calA), n) \to \hom_{\catH(b)}(\calA, G(n))
  $$
  as follows. Let $\calA = (\{1, 2, \ldots, k\}, E, \Boxed<)$ where $<$ is the usual
  ordering of the integers. Let $D_1$, \ldots, $D_m$ be all the nonempty downsets in $\calA$ and let
  $D_1 \alex D_2 \alex \ldots \alex D_m$.
  For $u \in \hom_{\catGR(\{0\}, X)}(F(\calA), n) = W^n_m(\{0\})$, let $X_i = u^{-1}(x_i)$, $1 \le i \le m$, and let
  $$
    a_i = \UNION\{X_\alpha : i \in D_\alpha\},
  $$
  $1 \le i \le k$. Put $\Phi_{\calA, n}(u) = \hat u$
  where $\hat u : \calA \to G(n) : i \mapsto a_i$.

  To show that the definition of $\Phi$ is corect we have to show that for every $u \in W^n_m(\{0\})$
  the mapping $\hat u$ is an embedding $\calA \hookrightarrow G(n)$. Take any $i_1, \ldots, i_b \in A$
  such that $\{i_1, \ldots, i_b\} \in E$ and let $\{i_1, \ldots, i_b\} = D_\alpha$. Then
  $a_{i_1} \sec \ldots \sec a_{i_b} \supseteq X_\alpha \ne \0$, so $\{ a_{i_1}, \ldots, a_{i_b} \} \in \calE_n$.
  
  On the other hand, assume that $\{ a_{i_1}, \ldots, a_{i_b} \} \in \calE_n$. Then
  $a_{i_1} \sec \ldots \sec a_{i_b} \ne \0$. Since each $a_j$ is a union of some $X_\alpha$'s and all the
  $X_\alpha$'s are pairwise disjoint, it follows that $a_{i_1} \sec \ldots \sec a_{i_b}$ is also a union of some
  $X_\alpha$'s. Therefore, there is a $\beta$ such that $a_{i_1} \sec \ldots \sec a_{i_b} \supseteq X_\beta$.
  Then $i_1, \ldots, i_b \in D_\beta$. Since $i_1, \ldots, i_b$ are pairwise distinct and $|D_\beta| \le b$
  by the definition of the downset, it follows that $D_\beta = \{i_1, \ldots, i_b\}$. Since the only $b$-elements
  downsets of $\calA$ are hyperedges of $\calA$, it follows that $\{i_1, \ldots, i_b\} \in E$.
  
  Finally, let us show that $i < j$ implies $a_i \clex a_j$. Since both $\{i\}$ and $\{j\}$ are
  downsets in $\calA$ there exist $\eta$ and $\xi$ such that $D_\eta = \{i\}$ and $D_\xi = \{j\}$.
  Then $X_\eta \subseteq a_i$ and $X_\eta \sec a_j = \0$, while $X_\xi \subseteq a_j$ and $X_\xi \sec a_i = \0$,
  whence follows that $a_i$ and $a_j$ are incomparable as sets. Note also that $\{i\} \alex \{j\} \alex D$ for
  every downset $D$ in $\calA$ such that $D \ni j$, so
  $$
    \min(a_i \setminus a_j) \le \min X_\eta < \min(a_j \setminus a_i).
  $$
  Therefore, $a_i \clex a_j$.

  So, the definition of $\Phi$ is correct. We still have to show that this family of maps
  satisfies the requirement~(PA).

  Let $\calB = (\{1, 2, \ldots, \ell\}, F, \Boxed<)$ be a finite linearly ordered $b$-uniform hypergraph
  that embeds into $\calA$.
  Let $D'_1$, \ldots, $D'_d$ be all the nonempty downsets in $\calB$ and let $D'_1 \alex D'_2 \alex \ldots \alex D'_d$.
  Take any embedding $f : \calB \hookrightarrow \calA$ and let us show that there is a word
  $h = h_1 h_2 \ldots h_m \in W^m_d(\{0\})$ such that $\Phi_{\calA, n}(u) \circ f = \Phi_{\calB, n}(u \cdot h)$.

  Define $h = h_1 h_2 \ldots h_m \in W^m_d(\{0\})$ as follows:
  $$
    h_i = \begin{cases}
      x_j, & f^{-1}(D_i) = D'_j\\
      0,   & \text{otherwise.}
    \end{cases}
  $$
  Let us first show that $h$ is indeed a $d$-parameter word. It is easy to see that
  every downset in $\calB$ is an inverse image of a downset in $\calA$
  so each of the variables $x_1, \ldots, x_d$ appears at least once in $h$.
  
  Let us show that $\min(h^{-1}(x_\alpha)) < \min(h^{-1}(x_\beta))$ whenever $1 \le \alpha < \beta \le d$.
  Take $\alpha$, $\beta$ such that $1 \le \alpha < \beta \le d$ and let $\min(h^{-1}(x_\beta)) = q$. Since $h_q = x_\beta$
  we know that $f^{-1}(D_q) = D'_\beta$. Take $p$ so that $D'_\alpha = f^{-1}(D_p)$. Then $h_p = x_\alpha$.
  If $p < q$ then $\min(h^{-1}(x_\alpha)) \le p < q = \min(h^{-1}(x_\beta))$ and we are done. Assume, therefore, that $p > q$.
  So, we have that $D_p \galex D_q$ (because $p > q$) and $f^{-1}(D_p) \alex f^{-1}(D_q)$ (because
  $\alpha < \beta$ whence $f^{-1}(D_p) = D'_\alpha \alex D'_\beta = f^{-1}(D_q)$).
  Let us show that in this case there exists a $D_r$ such that $D_r \alex D_q$ and $f^{-1}(D_r) = f^{-1}(D_p)$.
  Put $D_r = f(f^{-1}(D_p))$ so that $f^{-1}(D_r) = f^{-1}(D_p)$. We know that
  $f^{-1}(D_p) \alex f^{-1}(D_q)$, so $f(f^{-1}(D_p)) \alex f(f^{-1}(D_q))$.
  Since $f(f^{-1}(D_q)) \subseteq D_q$ it follows that $D_r = f(f^{-1}(D_p)) \alex f(f^{-1}(D_q)) \alex D_q$
  (as $\alex$ extends $\subseteq$). Now, $D_r \alex D_q$ implies that $r < q$,
  while $f^{-1}(D_r) = f^{-1}(D_p) = D'_\alpha$ means that $h_r = x_\alpha$.
  Therefore, $\min(h^{-1}(x_\alpha)) \le r < q = \min(h^{-1}(x_\beta))$, which completes the proof that $h$ is a $d$-parameter word.

  Let $X'_i = (u \cdot h)^{-1}(x_i)$, $1 \le i \le d$, and $a'_j = \UNION \{X'_\beta : j \in D'_\beta\}$, $1 \le j \le \ell$.
  The following is a strightforward but useful observation:
  if $h^{-1}(x_\beta) = \{\alpha_1, \ldots, \alpha_s\}$ then
  $D'_\beta = f^{-1}(D_{\alpha_1}) = \ldots = f^{-1}(D_{\alpha_s})$
  and $X'_\beta = (u \cdot h)^{-1}(x_\beta) = u^{-1}(x_{\alpha_1}) \union \ldots \union u^{-1}(x_{\alpha_s})
  = X_{\alpha_1} \union \ldots \union X_{\alpha_s}$.
  
  In order to complete the proof it suffices to show that $a'_j = a_{f(j)}$ for all $1 \le j \le \ell$.
  
  $(\subseteq)$:
  Take any $X'_\beta \subseteq a'_j$. Then $j \in D'_\beta$. Let $h^{-1}(x_\beta) = \{\alpha_1, \ldots, \alpha_s\}$. Then
  $D'_\beta = f^{-1}(D_{\alpha_1}) = \ldots = f^{-1}(D_{\alpha_s})$, whence $j \in f^{-1}(D_{\alpha_i})$ for all
  $1 \le i \le s$. Consequently, $f(j) \in D_{\alpha_i}$ for all $1 \le i \le s$, so
  $X_{\alpha_i} \subseteq a_{f(j)}$ for all $1 \le i \le s$. Finally, $X'_\beta = X_{\alpha_1} \union \ldots \union X_{\alpha_s}
  \subseteq a_{f(j)}$.
  
  $(\supseteq)$:
  Take any $X_\alpha \subseteq a_{f(j)}$. Then $f(j) \in D_\alpha$ whence $j \in f^{-1}(D_\alpha) = D'_\beta$
  so $X'_\beta \subseteq a'_j$. By the definition of $h$ we have that $h_\alpha = x_\beta$ whence $X_\alpha \subseteq X'_\beta$.
  Therefore, $X_\alpha \subseteq a'_j$.
\end{proof}

A finite linearly ordered $\Theta$-structure $\calA = (A, \Theta^\calA, \Boxed{<^\calA})$ is \emph{absolutely ordered}
if the following holds for every $R \in \Theta$:
$$
  \text{if } (a_1, a_2, \ldots, a_n) \in R^\calA \text{ then } a_1 \mathrel{<^\calA} a_2 \mathrel{<^\calA} \ldots \mathrel{<^\calA} a_n.
$$
Let $\catAREL(\Theta, \Boxed<)$ denote the class of all finite
absolutely ordered $\Theta$-structures.

\begin{LEM}\label{nrt.lem.AREL-FIN}
  The category $\catAREL(\Theta, \Boxed{<})$ has the Ramsey property
  for every finite relational language~$\Theta$.
\end{LEM}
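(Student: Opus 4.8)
The plan is to realise $\catAREL(\Theta,\Boxed{<})$ as a subcategory of a finite product of categories already known to be Ramsey, and then to invoke Theorems~\ref{sokic-prod} and~\ref{nrt.thm.1}. The key observation is that, because of the absolute ordering requirement, an $n$-ary relation on a linearly ordered set records nothing beyond the set of its increasingly listed tuples: for $n\ge 2$ it is literally an $n$-uniform hypergraph, and for $n=1$ it is a distinguished subset. Thus an absolutely ordered $\Theta$-structure is exactly one linearly ordered set carrying, independently, one hypergraph structure per relation symbol of arity $\ge 2$ together with a labelling of its points by the unary symbols. Listing the arity-$\ge 2$ symbols as $R_1,\ldots,R_p$ with arities $n_1,\ldots,n_p$, and letting $\LO_\Theta$ be the category of finite linearly ordered sets whose points are labelled by the unary symbols they satisfy (a classical Ramsey class, obtainable if desired by the pre-adjunction method of Theorem~\ref{nrt.thm.hgr}), I would set
$$
  \CC = \LO_\Theta \times \catH(n_1) \times \ldots \times \catH(n_p).
$$
By Theorems~\ref{nrt.thm.hgr} and~\ref{sokic-prod} the category $\CC$ has the Ramsey property; every morphism of $\CC$ is a tuple of embeddings, hence monic, and all hom-sets are finite. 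The assignment $\calA \mapsto \big(\LO_\Theta\text{-part of }\calA,\ \reduct{\calA}{R_1},\ldots,\reduct{\calA}{R_p}\big)$, sending an embedding $f$ to $(f,\ldots,f)$, identifies $\catAREL(\Theta,\Boxed{<})$ with the (non-full) subcategory $\DD$ of $\CC$ whose objects are the \emph{diagonal} tuples, those all of whose coordinates share one underlying linearly ordered set, and whose morphisms are the diagonal tuples $(f,\ldots,f)$. By Theorem~\ref{nrt.thm.1} it then suffices to prove that $\DD$ is closed for binary diagrams.

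So suppose a binary diagram in $\DD$ is given, with bottom object $\calA$, top object $\calB$, and, for each bottom vertex $s$, a pair of diagonal embeddings $u_s,v_s:\calA\to\calB$ into top vertices $i(s),j(s)$, and assume it is consistent in $\CC$. Since a cone in a product is precisely a coordinatewise family of cones, consistency in $\CC$ says exactly that in each coordinate the projected diagram has a compatible cone: for the $r$-th hypergraph coordinate there are embeddings $e^r_i:\reduct{\calB}{R_r}\to C_r$ with $e^r_{i(s)}\cdot u_s = e^r_{j(s)}\cdot v_s$, and likewise in the base coordinate. I would now build a single compatible cone in $\DD$ by gluing. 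Put $D=\big(\bigsqcup_i B\big)/{\sim}$, where $\sim$ is generated by identifying $u_s(a)$ in the $i(s)$-th copy with $v_s(a)$ in the $j(s)$-th copy for every $s$ and every $a\in A$, and let $\iota_i:B\to D$ be the evident maps. Each coordinate witness descends along $\sim$ (this is exactly the cone condition), which forces every $\iota_i$ to be injective. The base-coordinate cone embeds all copies, order-preservingly, into one linear order, so the relation on $D$ generated by the within-copy orders has no cycle and extends to a linear order $<_D$ for which every $\iota_i$ is an order-embedding. Finally, for each $R_r$ set $R_r^\calD=\UNION_i \iota_i(R_r^\calB)$ and push the unary labels forward in the same way; this makes $\calD$ an absolutely ordered structure into which every $\iota_i$ manifestly preserves all relations, and by construction $\iota_{i(s)}\cdot u_s=\iota_{j(s)}\cdot v_s$.

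The one step that genuinely uses the hypotheses, and which I expect to be the crux, is that each $\iota_i$ also \emph{reflects} the relations, i.e.\ that $\calD$ is not too rich. Suppose $\iota_i$ carries an increasing tuple $(b_1,\ldots,b_{n_r})$ of $\calB$ into $R_r^\calD$. Writing $\iota_i(b_1)<_D\cdots<_D\iota_i(b_{n_r})$ and comparing it entry by entry with the increasing listing of a witnessing tuple $\iota_j(c_1)<_D\cdots<_D\iota_j(c_{n_r})$ from some copy $j$ (with $(c_1,\ldots,c_{n_r})\in R_r^\calB$), we get $\iota_i(b_l)=\iota_j(c_l)$ for every $l$; hence the corresponding points are $\sim$-identified. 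But then the $r$-th coordinate witness sends them to equal points of $C_r$, so $\{e^r_i(b_l)\}_l=\{e^r_j(c_l)\}_l$ lies in the relation of $C_r$ because $e^r_j$ preserves $R_r$; and since $e^r_i$ reflects $R_r$, the tuple $(b_1,\ldots,b_{n_r})$ already lies in $R_r^\calB$. Thus every $\iota_i:\calB\to\calD$ is an embedding, and $\calD$ together with the $\iota_i$ is a compatible cone in $\DD$, so the diagram is consistent in $\DD$.

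The delicacy throughout is that the separate coordinate gluings need not agree on how they interleave the copies or where they place points, so one cannot simply reuse a single product witness as the apex. The quotient construction circumvents this because the identifications $\sim$ are coordinate-independent (they come from the diagonal morphisms $u_s,v_s$), the linear order on $D$ is recovered from just one coordinate, and the reflection of each individual relation $R_r$ is verified inside its own coordinate $C_r$. Establishing these three facts simultaneously for one and the same absolutely ordered $\calD$ is exactly the content of the closedness-for-binary-diagrams condition, and once it is in hand Theorem~\ref{nrt.thm.1} delivers the Ramsey property of $\catAREL(\Theta,\Boxed{<})$.
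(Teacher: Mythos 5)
Your proposal is correct, and its overall architecture coincides with the paper's: one Ramsey factor per relation symbol, Theorem~\ref{sokic-prod} for the finite product, the diagonal (non-full) subcategory isomorphic to $\catAREL(\Theta,\Boxed{<})$, and Theorem~\ref{nrt.thm.1} via closedness for binary diagrams. Where you genuinely diverge is in how that closedness is witnessed. The paper works ``limit-style'' with the given cone: it takes $D=C_1\times\ldots\times C_n$ equipped with the lexicographic order, declares $(d_1,\ldots,d_{r_s})\in R_s^\calD$ exactly when the $s$-th coordinates lie in $R_s^{\calC_s}$ and the first coordinates increase, and uses the tupled maps $\phi_i(b)=(e_i^1(b),\ldots,e_i^n(b))$, so that reflection of each relation is immediate from reflection by $e_i^s$. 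You instead work ``colimit-style'': you amalgamate the copies of $B$ along the identifications dictated by the diagram and use the $\CC$-cone only as a certificate, and your three verifications are exactly the right ones --- the witnesses descend to well-defined maps $q_r:D\to C_r$ with $q_r\circ\iota_i=e_i^r$ (which is also why glued points carry equal unary labels), injectivity of each $\iota_i$ and acyclicity of the union of the within-copy orders follow by composing with these maps, and reflection of each $R_r$ is checked inside its own coordinate. Your apex is arguably more canonical (it is essentially the colimit of the diagram among $\Theta$-structures, then linearized), at the price of the quotient-and-Szpilrajn bookkeeping that the paper's product apex avoids. A second substantive difference is your explicit treatment of unary symbols via the labelled-linear-order factor $\LO_\Theta$: the paper's identification $\catAREL(\{R\},\Boxed{<})\cong\catH(r)$ and Theorem~\ref{nrt.thm.hgr} are only stated for $r\ge 2$, and unary symbols cannot be waved away in this paper, since they occur in the language $X_\Theta$ of the final theorem (take $(R,\sigma)$ with a single $\equiv_\sigma$-class); so your separate factor actually patches a point the paper glosses over. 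Your one imported ingredient is the unproved assertion that $\LO_\Theta$ is Ramsey; this is true and classical, and your parenthetical is indeed substantiable by the paper's own method: with $L$ the set of labels, let $F(\calA)$ be the number of points of $\calA$, let $G(n)$ be $\{1,\ldots,n\}\times L$ ordered lexicographically and labelled by the second coordinate, and for $u\in W^n_k(L)$ let $\Phi_{\calA,n}(u):i\mapsto(\min u^{-1}(x_i),\lambda_\calA(i))$, where $\lambda_\calA(i)$ is the label of the $i$-th point; condition (PA) then holds for the word $h$ with $h_{f(j)}=x_j$ and constant letters elsewhere, just as in the proof of Theorem~\ref{nrt.thm.hgr}.
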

\begin{proof}
  Assume, first, that $\Theta = \{R\}$ where $R$ is an $r$-ary relational symbol. Then it is easy to see that
  the categories $\catAREL(\{R\}, \Boxed{<})$ and $\catH(r)$ are isomorphic,
  so $\catAREL(\{R\}, \Boxed{<})$ also has the Ramsey property.

  Assume, now, that $\Theta = \{ R_1, R_2, \ldots, R_n \}$ is a finite relational language.
  Let $\CC_i$ denote the category $\catAREL(\{R_i\}, \Boxed<)$, $1 \le i \le n$.
  For an object $\calA = (A, R_1^\calA, \ldots, R_n^\calA, \Boxed{<}) \in \Ob(\catAREL(\Theta, \Boxed{<}))$ let
  $\calA^{(i)} = (A, R_i^\calA, \Boxed{<}) \in \Ob(\CC_i)$.
  As we have just seen each $\CC_i$ has the Ramsey property, so the product category
  $\CC_1 \times \ldots \times \CC_n$ has the Ramsey property
  by Theorem~\ref{sokic-prod}. Let $\DD$ be the following subcategory of $\CC_1 \times \ldots \times \CC_n$:
  \begin{itemize}
  \item
    every $\calA = (A, R_1^\calA, \ldots, R_n^\calA, \Boxed{<}) \in \Ob(\catAREL(\Theta, \Boxed{<}))$ gives rise to an object
    $\overline \calA = (\calA^{(1)}, \ldots, \calA^{(n)})$ of $\DD$, and these are the only objects in~$\DD$;
  \item
    every morphism $f : \calA \to \calB$ in $\catAREL(\Theta, \Boxed{<})$ gives rise to a morphism
    $\overline f = (f, \ldots, f) : \overline \calA \to \overline \calB$ in $\DD$, and these are the only morphisms in~$\DD$.
  \end{itemize}
  Clearly, the categories $\DD$ and $\catAREL(\Theta, \Boxed{<})$ are isomorphic,
  so in order to complete the proof of the lemma it suffices to show that $\DD$ has the Ramsey property.
  
  As $\DD$ is a subcategory of $\CC_1 \times \ldots \times \CC_n$ and the latter one has the Ramsey property,
  following Theorem~\ref{nrt.thm.1} it suffices to show that $\DD$ is closed for binary diagrams.
  Let $F : \Delta \to \DD$ be a binary diagram which is consistent in $\CC_1 \times \ldots \times \CC_n$ and let
  $(\calC_1, \ldots, \calC_n)$ together with the morphisms $e_1, \ldots, e_k$ be a compatible cone
  in $\CC_1 \times \ldots \times \CC_n$ over~$F$:
  $$
  \xymatrix{
    & & (\calC_1, \ldots, \calC_n)
  \\
    \overline \calB \ar[urr]^{e_1} & \overline \calB \ar[ur]_(0.6){e_i} & \ldots & \overline \calB \ar[ul]^(0.6){e_j} & \overline \calB \ar[ull]_{e_k}
  \\
    \overline \calA \ar[u] \ar[ur] & \overline \calA \ar[urr]_(0.4){\overline v} \ar[u]_(0.65){\overline u} & \ldots & \overline \calA \ar[ur] \ar[ul] & \DD
    \save "2,1"."3,5"*[F]\frm{} \restore
  }
  $$
  Recall that $\calC_i = (C_i, R_i^{\calC_i}, \Boxed{<})$ and that each $e_i$ is a tuple $e_i = (e_i^1, \ldots, e_i^n)$ where
  $e_i^s : \calB^{(s)} \hookrightarrow \calC_s$. Let
  $
    D = C_1 \times \ldots \times C_n
  $
  and let $\lex$ denote the lexicographic order on $D$ induced by the linear orders $(C_i, \Boxed<)$, $1 \le i \le n$.
  Let $\calD = (D, R_1^\calD, \ldots, R_n^\calD, \Boxed{\lex})$ where
  $$
    R_s^\calD = \{(d_1, d_2, \ldots, d_{r_s}) \in D : (d_1^s, d_2^s, \ldots, d_{r_s}^s) \in R_s^{\calC_s} \text{ and } d_1^1 < \ldots < d_{r_s}^1 \}.
  $$
  Here, $r_s$ is the arity of $R_s$ and $d_j = (d_j^1, \ldots, d_j^n)$.
  To see that $\calD \in \Ob(\catAREL(\Theta))$ it suffices to note that $(d_1, d_2, \ldots, d_{r_s}) \in R_s^\calD$ implies
  $d_1^1 < \ldots < d_{r_s}^1$ whence $d_1 \lex d_2 \lex \ldots \lex d_{r_s}$. Consequently,
  $\overline \calD \in \Ob(\DD)$.
  
  For each morphism $e_i = (e_i^1, \ldots, e_i^n)$ let $\phi_i : B \to D$ be the following mapping:
  $$
    \phi_i(b) = (e_i^1(b), \ldots, e_i^n(b)) \in D.
  $$
  Let us show that $\phi_i : \calB \to \calD$ is an embedding for each~$i$.
  
  Assume, first, that $b < b'$ in $\calB$. Then $e_i^1(b) < e_i^1(b')$ in $\calC_1$ so
  $$
    \phi_i(b) = (e_i^1(b), \ldots, e_i^n(b)) \lex (e_i^1(b'), \ldots, e_i^n(b')) = \phi_i(b').
  $$
  
  Assume, now, that $(b_1, b_2, \ldots, b_{r_s}) \in R_s^\calB$. Then $b_1 < b_2 < \ldots < b_{r_s}$ in $\calB$,
  whence, as we have just seen, $e_i^1(b_1) < e_i^1(b_2) < \ldots < e_i^1(b_{r_s})$. Moreover,
  since $e_i^s : \calB^{(s)} \hookrightarrow \calC_s$ we have that
  $(e_i^s(b_1), e_i^s(b_2), \ldots, e_i^s(b_{r_s})) \in R_s^{\calC_s}$. Therefore,
  $(\phi_i(b_1), \phi_i(b_2), \ldots, \phi_i(b_{r_s})) \in R_s^\calD$.
  
  Conversely, assume that $(\phi_i(b_1), \phi_i(b_2), \ldots, \phi_i(b_{r_s})) \in R_s^\calD$.
  Then\break
  $(e_i^s(b_1), e_i^s(b_2), \ldots, e_i^s(b_{r_s})) \in R_s^{\calC_s}$.
  Since $e^s_i$ is an embedding we immediately conclude that $(b_1, b_2, \ldots, b_{r_s}) \in R_s^\calB$.
  
  Therefore, $\phi_i : \calB \to \calD$ is an embedding for each~$i$, whence follows that $\overline \phi_i : \overline \calB \to
  \overline \calD$ is a morphism in $\DD$ for each~$i$. To complete the proof we still have to show that
  $\overline \phi_i \circ \overline u = \overline \phi_j \circ \overline v$ whenever
  $e_i \circ \overline u = e_j \circ \overline v$. Assume that $e_i \circ \overline u = e_j \circ \overline v$, i.e.
  $$
    (e_i^1 \circ u, e_i^2 \circ u, \ldots, e_i^n \circ u) = (e_j^1 \circ v, e_j^2 \circ v, \ldots, e_j^n \circ v).
  $$
  Then
  \begin{align*}
    \phi_i \circ u &= (e_i^1 \circ u, e_i^2 \circ u, \ldots, e_i^n \circ u) \\
                   &= (e_j^1 \circ v, e_j^2 \circ v, \ldots, e_j^n \circ v) \\
                   &= \phi_j \circ v
  \end{align*}
  whence easily follows that $\overline \phi_i \circ \overline u = \overline \phi_j \circ \overline v$. This concludes the proof.
\end{proof}

\begin{THM}[Ne\v set\v ril-R\"odl] \cite{AH,Nesetril-Rodl-1983}
  $\catREL(\Theta, \Boxed{<})$ has the Ramsey property for every relational language~$\Theta$.
\end{THM}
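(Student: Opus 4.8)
The plan is to reduce the general case to the \emph{absolutely ordered} case already settled by Lemma~\ref{nrt.lem.AREL-FIN}. The only obstruction is that in an object of $\catREL(\Theta, \Boxed<)$ the tuples of a relation may sit in the ambient linear order in any way at all, whereas in $\catAREL$ they must be strictly increasing. The key observation is that the position of a tuple relative to the order is a \emph{finite, embedding-invariant} datum, so a single relation can be split into finitely many absolutely ordered relations, one for each possible ``order pattern'' of its tuples.

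Concretely, for each $R \in \Theta$ of arity $r$ and each surjection $\sigma : \{1, \ldots, r\} \to \{1, \ldots, s\}$ (with $1 \le s \le r$) I would introduce a fresh $s$-ary symbol $R_\sigma$, collecting all of them into a language $\Theta^+$. Given a linearly ordered $\Theta$-structure $\calA = (A, \Theta^\calA, \Boxed{<})$ I define an absolutely ordered $\Theta^+$-structure $\calA^+$ on the same ordered set by
$$
  R_\sigma^{\calA^+} = \{(c_1, \ldots, c_s) : c_1 < \ldots < c_s \text{ and } (c_{\sigma(1)}, \ldots, c_{\sigma(r)}) \in R^\calA\}.
$$
Every tuple $(a_1, \ldots, a_r)$ has a unique order pattern: sorting its distinct entries as $c_1 < \ldots < c_s$ produces a surjection $\sigma$ with $a_i = c_{\sigma(i)}$, and the tuple is recorded exactly once, in $R_\sigma^{\calA^+}$. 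Conversely an arbitrary absolutely ordered $\Theta^+$-structure reassembles into a $\Theta$-structure by reading this recipe backwards, and the two passages are mutually inverse, so $\calA \mapsto \calA^+$ is a bijection on objects. A routine check then shows that a map is an embedding of $\Theta$-structures if and only if it is an embedding of the associated $\Theta^+$-structures, because an embedding, being an order-preserving and order-reflecting injection, carries every tuple to one with the same order pattern. Thus $\catREL(\Theta, \Boxed<)$ and $\catAREL(\Theta^+, \Boxed<)$ are isomorphic categories, and since the Ramsey property is invariant under isomorphism it suffices to prove that $\catAREL(\Theta^+, \Boxed<)$ has the Ramsey property.

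When $\Theta$ is finite so is $\Theta^+$ (each symbol spawns only finitely many $R_\sigma$), and Lemma~\ref{nrt.lem.AREL-FIN} finishes the argument at once. For infinite $\Theta$ I would reduce to a finite sublanguage one pair of structures at a time. Fix $\calA \embedsto \calB$ in $\catAREL(\Theta^+, \Boxed<)$ and $k \ge 2$, and regard $\calA$ as a substructure of $\calB$. In an absolutely ordered structure a symbol of arity exceeding $|B|$ is empty on both $\calA$ and $\calB$ and imposes no constraint on embeddings out of them, while for each arity $s \le |B|$ there are only finitely many possible values of the interpretation inside $\calB$. Choosing one representative symbol from each class, where two symbols of arity $s \le |B|$ are deemed equivalent when they have the same interpretation in $\calB$, yields a finite $\Sigma_0 \subseteq \Theta^+$; writing $\calA_0, \calB_0$ for the $\Sigma_0$-reducts, the finite case supplies a $\calC_0 \in \catAREL(\Sigma_0, \Boxed<)$ with $\calC_0 \longrightarrow (\calB_0)^{\calA_0}_k$. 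I then expand $\calC_0$ to a $\Theta^+$-structure $\calC$ by copying each representative's interpretation onto every symbol of its class and setting all relations of arity $> |B|$ empty. This expansion is arranged so that $\hom(\calA, \calC) = \hom(\calA_0, \calC_0)$, and likewise for the pairs $(\calA, \calB)$ and $(\calB, \calC)$, whence the Ramsey arrow for $\calC_0$ lifts verbatim to $\calC \longrightarrow (\calB)^\calA_k$.

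The conceptual heart is the first step: recognising that ``order pattern of a tuple'' is the correct finite invariant along which to split relations, so that all of $\catREL$ collapses onto the absolutely ordered world handled by the Lemma. The part demanding the most care is verifying that this splitting is a genuine isomorphism of categories --- in particular that embeddings \emph{reflect} the decomposed relations, which rests on an embedding preserving the order pattern of every tuple. The infinite-language reduction is the secondary technical point, and there the one thing to watch is that an embedding of the $\Sigma_0$-reducts is automatically an embedding of the full $\Theta^+$-structures; this is exactly what the class-representative expansion of $\calC_0$ is designed to guarantee.
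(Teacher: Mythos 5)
Your proposal is correct and is essentially the paper's own proof: your surjections $\sigma : \{1, \ldots, r\} \to \{1, \ldots, s\}$ encode exactly the total quasiorders $\tp(\overline a)$ used in the paper, so your $\Theta^+$ is the paper's language $X_\Theta$ and your order-pattern splitting is the paper's isomorphism $\calA \mapsto \calA^\dagger$ between $\catREL(\Theta, \Boxed<)$ and $\catAREL(X_\Theta, \Boxed<)$. Your infinite-language reduction (arities above $|B|$ forced empty, finitely many possible interpretations in $\calB$, a finite set $\Sigma_0$ of representatives, and the expansion of $\calC_0$ by copying interpretations) is likewise the paper's argument verbatim, merely performed after the isomorphism rather than before it.
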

\begin{proof}
  Let us first show that $\catAREL(\Theta, \Boxed{<})$ has the Ramsey property for every relational language~$\Theta$.
  Fix an arbitrary relational language $\Theta$ such that $\Boxed< \notin \Theta$ and take any $k \ge 2$ and
  $\calA, \calB \in \Ob(\catAREL(\Theta, \Boxed{<}))$ such that $\calA \hookrightarrow \calB$.
  Since $\calB$ is a finite absolutely ordered $\Theta$-structure we have that
  $R^\calB = \0$ for every $R \in \Theta$ such that $\arity(R) > |B|$. Moreover, on a finite set there are only finitely many
  relations whose arities do not exceed $|B|$. Therefore, there exists a finite $\Sigma \subseteq \Theta$ such that
  for every $R \in \Theta \setminus \Sigma$ we have $R^\calB = \0$ or $R^\calB = S^\calB$ for some $S \in \Sigma$.
  Since $\calA \hookrightarrow \calB$ we have the following:
  if $R^\calB = \0$ for some $R \in \Theta \setminus \Sigma$ then $R^\calA = \0$, and
  if $R^\calB = S^\calB$ for some $R \in \Theta \setminus \Sigma$ and $S \in \Sigma$ then
  $R^\calA = S^\calA$.
  
  The category $\catAREL(\Sigma, \Boxed{<})$ has the Ramsey property because $\Sigma$ is finite (Lemma~\ref{nrt.lem.AREL-FIN}),
  so there is a $\calC = (C, \Sigma^\calC, \Boxed{<^\calC}) \in \Ob(\catAREL(\Sigma, \Boxed{<}))$ such that
  $$
    \calC \longrightarrow (\reduct \calB {\Sigma \union \{<\}})^{\reduct \calA {\Sigma \union \{<\}}}_k.
  $$
  Define $\calC^* = (C, \Theta^{\calC^*}, \Boxed{<^{\calC^*}})
  \in \Ob(\catAREL(\Theta, \Boxed{<}))$ as follows:
  \begin{itemize}
  \item
    \Boxed{<^{\calC^*}} = \Boxed{<^{\calC}};
  \item
    if $S \in \Sigma$ let $S^{\calC^*} = S^{\calC}$;
  \item
    if $R \in \Theta \setminus \Sigma$ and $R^\calB = \0$ let $R^{\calC^*} = \0$;
  \item
    if $R \in \Theta \setminus \Sigma$ and $R^\calB = S^\calB$ for some $S \in \Sigma$, let $R^{\calC^*} = S^{\calC^*}$.
  \end{itemize}
  Clearly, $\calC^*$ is a finite absolutely ordered $\Theta$-structure and $\calC^* \longrightarrow (\calB)^\calA_k$.

  Finally, let us show that $\catREL(\Theta, \Boxed{<})$ has the Ramsey property for every relational language~$\Theta$.
  We start by recalling some basic facts about total quasiorders.
  
  A total quasiorder is a reflexive and transitive binary relation such that each pair of elements
  of the underlying set is comparable. Each total quasiorder $\sigma$ on a set $I$ induces an equivalence relation $\equiv_\sigma$
  on $I$ and a linear order $\sqsubset_\sigma$ on $I / \Boxed{\equiv_\sigma}$ in a natural way: $i \mathrel{\equiv_\sigma} j$ if
  $(i, j) \in \sigma$ and $(j, i) \in \sigma$, and $(i / \Boxed{\equiv_\sigma}) \mathrel{\sqsubset_\sigma} (j / \Boxed{\equiv_\sigma})$
  if $(i, j) \in \sigma$ and $(j, i) \notin \sigma$.

  Let $(A, \Boxed<)$ be a linearly ordered set, let $r$ be a positive integer, let $I = \{1, \ldots, r\}$ and
  let $\overline a = (a_1, \ldots, a_r) \in A^r$. Then
  $$
    \tp(\overline a) = \{(i, j) : a_i \le a_j \}
  $$
  is a total quasiorder on $I$ which we refer to as the \emph{type} of~$\overline a$.
  Assume that $\sigma = \tp(\overline a)$. Let $s = |I / \Boxed{\equiv_\sigma}|$ and let $i_1$, \ldots, $i_s$
  be the representatives of the classes of $\equiv_\sigma$ enumerated so that
  $(i_1 / \Boxed{\equiv_\sigma}) \mathrel{\sqsubset_\sigma} \ldots \mathrel{\sqsubset_\sigma} (i_s / \Boxed{\equiv_\sigma})$.
  Then
  $$
    \mat(\overline a) = (a_{i_1}, \ldots, a_{i_s})
  $$
  is the \emph{matrix of $\overline a$}. Note that $a_{i_1} < \ldots < a_{i_s}$.
  
  Conversely, given a matrix and a total
  quasiorder we can always reconstruct the original tuple as follows. For a total quasiorder $\sigma$ on $I$ such that
  $|I / \Boxed{\equiv_\sigma}| = s$ and an $s$-tuple $\overline b = (b_1, \ldots, b_s) \in A^s$ such that $b_1 < \ldots < b_s$
  define an $r$-tuple
  $$
    \tup(\sigma, \overline b) = (a_1, \ldots, a_r) \in A^r
  $$
  as follows. Let $i_1$, \ldots, $i_s$ be the representatives of the classes of $\equiv_\sigma$ enumerated so that
  $(i_1 / \Boxed{\equiv_\sigma}) \mathrel{\sqsubset_\sigma} \ldots \mathrel{\sqsubset_\sigma} (i_s / \Boxed{\equiv_\sigma})$.
  Then put
  $$
    a_\eta = b_\xi \text{ if and only if } \eta \mathrel{\equiv_\sigma} i_\xi.
  $$
  (In other words, we put $b_1$ on all the entries in $i_1 / \Boxed{\equiv_\sigma}$, we put
  $b_2$ on all the entries in $i_2 / \Boxed{\equiv_\sigma}$, and so on.) Then it is a matter of routine to check that
  \begin{equation}\label{nrt.eq.tup-mat}
    \begin{aligned}
      \tp(\tup(\sigma, \overline b)) = \sigma, \text{ } \mat(\tup(\sigma, \overline b))    &= \overline b, \text{ and}\\
      \tup(\tp(\overline a), \mat(\overline a)) &= \overline a,\\
    \end{aligned}
  \end{equation}
  Now, for a relational language $\Theta$ such that $\Boxed< \notin \Theta$ let
  $$
    X_\Theta = \{ (R, \sigma) : R \in \Theta \text{ and } \sigma \text{ is a total quasiorder on } \{1, 2, \ldots, \arity(R)\} \}
  $$
  be a relational language where $\arity(R, \sigma) = |I / \Boxed{\equiv_\sigma}|$. For 
  $\calA = (A, \Theta^\calA, \Boxed{<^\calA}) \in \Ob(\catREL(\Theta, \Boxed{<}))$
  define a $\calA^\dagger = (A, X_\Theta^{\calA^\dagger}, \Boxed{<^{\calA^\dagger}})$ as follows:
  \begin{align*}
    \Boxed{<^{\calA^\dagger}} &= \Boxed{<^{\calA}},\\
    (R, \sigma)^{\calA^\dagger} &= \{\mat(\overline a): \overline a \in R^\calA \text{ and } \tp(\overline a) = \sigma \}.
  \end{align*}
  Clearly, $\calA^\dagger \in \Ob(\catAREL(X_\Theta, \Boxed<))$. On the other hand, take any
  $\calB = (B, X_\Theta^\calB, \Boxed{<^\calB}) \in \Ob(\catAREL(X_\Theta, \Boxed<))$
  and define $\calB^* = (B, \Theta^{\calB^*}, \Boxed{<^{\calB^*}}) \in \Ob(\catREL(\Theta, \Boxed<))$ as follows:
  \begin{align*}
    \Boxed{<^{\calB^*}} &= \Boxed{<^{\calB}},\\
    R^{\calB^*} &= \{\tup(\sigma, \overline a) : \sigma \text{ is a total quasiorder on } \{1, 2, \ldots, \arity(R)\}\\
                &\qquad\qquad\qquad\qquad\text{and } \overline a \in (R, \sigma)^\calB\}.
  \end{align*}
  Because of \eqref{nrt.eq.tup-mat} we have that $(\calA^\dagger)^* = \calA$ and $(\calB^*)^\dagger = \calB$ for all
  $\calA \in \Ob(\catREL(\Theta, \Boxed{<}))$ and all $\calB \in \Ob(\catAREL(X_\Theta, \Boxed{<}))$.
  Therefore, the functor
  $$
    F : \catREL(\Theta, \Boxed{<}) \to \catAREL(X_\Theta, \Boxed{<}) : \calA \mapsto \calA^\dagger : f \mapsto f
  $$
  is an isomorphism between the categories $\catREL(\Theta, \Boxed{<})$ and $\catAREL(X_\Theta, \Boxed{<})$, its inverse being
  $$
    G : \catAREL(X_\Theta, \Boxed{<}) \to \catREL(\Theta, \Boxed{<}) : \calB \mapsto \calB^* : f \mapsto f.
  $$
  Since $\catAREL(\Theta, \Boxed{<})$ has the Ramsey property and $\catREL(\Theta, \Boxed{<})$ is isomorphic to
  $\catAREL(X_\Theta, \Boxed{<})$, it follows immediately that $\catREL(\Theta, \Boxed{<})$ has the Ramsey property.
\end{proof}

\section{Acknowledgements}

The author would like to thank Miodrag Soki\'c and Jan Hubi\v cka for fruitful discussions concerning earlier versions
which improved the clarity, correctness and consistency of the paper.

\textcolor{blue}{The author would also like to express his gratitude to Dana Barto\v sov\'a and Valentino Vito
for bringing to my attention a mistake in the earlier version of the paper, for suggesting a new notion
of downsets in the proof of Theorem~4.1 and verifying that this new notion makes the proof of Theorem~4.1
correct.}

The author gratefully acknowledges the support of the Grant No.\ 174019 of the Ministry of Education, Science and Technological Development of the Republic of Serbia.

\end{document}